\numberwithin{equation}{section}
\newtheorem{theorem}{Theorem}[section]
\newtheorem{proposition}[theorem]{Proposition}
\newtheorem{corollary}[theorem]{Corollary}
\theoremstyle{definition}
\newtheorem{definition}[theorem]{Definition}
\newtheorem{def-prop}[theorem]{Definition-Proposition}
\newtheorem{remark}[theorem]{Remark}
\newtheorem{example}[theorem]{Example}
\newtheorem*{acknowledgement}{Acknowledgements}
\newtheorem*{Mysketch}{Sketch of proof} 
  {\pushQED{\qed}\begin{Mysketch}}
  {\popQED\end{Mysketch}}
\DeclareMathOperator{\depth}{depth}
\def\NN{\mathbb{N}}
\def\ZZ{\mathbb{Z}}
\begin{document}

\title{Depth of powers of squarefree monomial ideals}

\author{Louiza Fouli}
\address{Department of Mathematical Sciences \\
New Mexico State University\\
P.O. Box 30001 \\
Department 3MB \\
Las Cruces, NM 88003}
\email{lfouli@nmsu.edu}
\urladdr{http://www.web.nmsu.edu/~lfouli}

\author{Huy T\`ai H\`a}
\address{Department of Mathematics \\
Tulane University \\
6823 St. Charles Avenue \\
New Orleans, LA 70118}
\email{tha@tulane.edu}
\urladdr{http://www.math.tulane.edu/~tai/}

\author{Susan Morey}
\address{Department of Mathematics \\
Texas State University\\
601 University Drive\\
San Marcos, TX 78666}
\email{morey@txstate.edu}
\urladdr{http://www.txstate.edu/~sm26/}

\keywords{regular sequence, initially regular sequence, depth, projective dimension, monomial ideal, edge ideal, powers of ideals, edgewise domination, simplicial forest, hyperforest}
\subjclass[2010]{13C15, 13D05, 13F55, 05E40}

\begin{abstract}
We derive two general bounds for the depths of powers of squarefree monomial ideals corresponding to hyperforests. These bounds generalize known bounds for the depths of squarefree monomial ideals, which were given in terms of the edgewise domination number of the corresponding hypergraphs and the lengths of initially regular sequences with respect to the ideals.
\end{abstract}

\maketitle

\section{Introduction}\label{intro}

During the past two decades, many papers have appeared with various approaches to computing lower bounds for the depth, or equivalently upper bounds for the projective dimension, of $R/I$ for a \emph{squarefree} monomial ideal $I$ (cf. \cite{DS, DS2, HHKO1, LM1, LM2, P3}). The general idea has been to associate to the ideal $I$ a graph or hypergraph $G$ and use \emph{dominating} or \emph{packing} invariants of $G$ to bound the depth of $R/I$.

In general, given an ideal $I \subseteq R$, it is not just the depth of $R/I$ that attracts significant attention; rather, it is the entire \emph{depth function} $\depth R/I^s$, for $s \in \NN$.
A result by Burch, that was later improved by Brodmann, states that $\lim \limits_{s \rightarrow \infty} \depth R/I^{s} \leq \dim R -\ell(I)$, where $\ell(I)$ is the analytic spread of $I$ \cite{Brod, Bur}. Moreover, Eisenbud and Huneke \cite{EH} showed that if, in addition, the associated graded ring, ${\rm{gr}}_{I}(R)$, of $I$ is Cohen-Macaulay, then the above inequality becomes an equality. Therefore, one can say that the limiting behavior of the $\depth R/I^s$ is quite well understood. 
It is then natural to consider the initial behavior of the depth function (cf. \cite{BHH, FM, HS, HangT, HH2, HV, HKTT, HoaT, MST, MN, Morey, MV, TT}).

Examples have been exhibited to show that the initial behavior of $\depth R/I^s$ can be wild, see \cite{BHH}. In fact, it was conjectured by Herzog and Hibi \cite{HH2} that for any numerical function $f: \NN \rightarrow \ZZ_{\geq 0}$ that is asymptotically constant, there exists an ideal $I \subseteq R$ in a polynomial ring such that $f(s) = \depth R/I^s$ for all $s \geq 1$. This conjecture has recently been resolved affirmatively in \cite{HNTT}. It was proven in \cite{HNTT} that the depth function of a monomial ideal can be any numerical function that is asymptotically constant. Yet, it is still not clear what depth functions are possible for squarefree monomial ideals.

Unlike the case for $\depth R/I$, few lower bounds for $\depth R/I^s$, $s \in \NN$, are known (cf. \cite{FM, Morey, TT}). One reason for this is that powers of squarefree monomial ideals are not squarefree and so many of the known bounds for $R/I$ do  not apply to $R/I^s$. To address this situation, we adapt a proof technique from \cite{BHT} to  generalize bounds for $\depth R/I$, that were given by Dao and Schweig \cite{DS2}, in terms of the \emph{edgewise domination number}, and by the authors \cite{FHM}, in terms of the length of an \emph{initially regular sequence}. We provide lower bounds for the depth function $\depth R/I^s$, $s \in \NN$, when $I$ is a squarefree monomial ideal corresponding to a hyperforest or a forest, Theorems \ref{thm.powerforest} and \ref{star.bound.powers}. 

Our results, Theorems \ref{thm.powerforest} and \ref{star.bound.powers}, predict correctly the general behavior, as computation indicates for random hyperforests and forests, that the depth function $\depth R/I(G)^s$ decreases incrementally as $s$ increases. For specific examples, our bound in Theorem \ref{thm.powerforest} could be far from the actual values of the depth function -- and this is because the starting bound for $\depth R/I$ in terms of the edgewise domination number is not always optimal. For forests, Theorem \ref{star.bound.powers} could provide a more accurate starting bound for $\depth R/I$ using initially regular sequences and, thus, be closer to the depth function.

The common important underlying idea behind Theorems \ref{thm.powerforest} and \ref{star.bound.powers} is that if $\alpha(G)$ is an invariant associated to a hyperforest $G$ that gives the initial bound $\depth R/I(G) \ge \alpha(G)$ and satisfies a certain inequality when restricted to subhypergraphs then one should have
$$\depth R/I(G)^s \ge \max\{\alpha(G)-s+1, 1\}.$$
Our work in this paper, thus, could be interpreted as the starting point of a research program in finding such combinatorial invariants $\alpha(G)$ to best describe the depth function of squarefree monomial ideals, which we hope to continue to pursue in future works.

\begin{acknowledgement} The first author was partially supported by a grant from the Simons Foundation (grant \#244930). The second named author is partially supported by Louisiana Board of Regents (grant \#LEQSF(2017-19)-ENH-TR-25). We also thank Seyed Amin Seyed Fakhari for pointing out an error in a previous version of the article. 
\end{acknowledgement}

\section{Background} \label{sec.ini}

For unexplained terminology, we refer the reader to \cite{BH} and \cite{HH}. Throughout the paper, $R = k[x_1, \ldots, x_n]$ is a polynomial ring over an arbitrary field $k$ and all hypergraphs will be assumed to be {\em simple}, that is, there are no containments among the edges. For a hypergraph $G = (V_G,E_G)$ over the vertex set $V_G = \{x_1, \ldots, x_n\}$, the \emph{edge ideal} of $G$ is defined to be
$$I(G) = \left\langle \prod_{x \in e} x ~\Big|~ e \in E_G\right\rangle \subseteq R.$$
This construction gives a one-to-one correspondence between squarefree monomial ideals in $R = k[x_1, \ldots, x_n]$ and (simple) hypergraphs on the vertex set $V = \{x_1, \ldots, x_n\}$. 

For a vertex $x$ in a graph or hypergraph $G$, we say $y$ is a {\em neighbor} of $x$ if there exists an edge $E \in E_G$ such that $x,y \in E$. The {\em neighborhood} of $x$ in $G$ is $N_G(x) = \{ y \in V_G \mid y \, {\mbox{\rm is a neighbor of}} \, x\}$. The {\em closed neighborhood} of $x$ in $G$ is $N_G[x]= N_G(x) \cup \{x\}$. Note that the $G$ in the notation will be suppressed when it is clear from context. 

Simplicial forests were defined by Faridi in \cite{Faridi}, where it was shown that the edge ideals of these hypergraphs are always sequentially Cohen-Macaulay. They have also been used in the study of standard graded (symbolic) Rees algebras of squarefree monomial ideals \cite{HHTZ}. We first recall the definition of a simplicial forest (or a hyperforest for short).
\begin{definition} Let $G = (V,E)$ be a simple hypergraph.
\begin{enumerate}
\item An edge $e \in E$ is called a \emph{leaf} if either $e$ is the only edge in $G$ or there exists $e \not= g \in E$ such that for any $e \not= h \in E$, $e \cap h \subseteq e \cap g$.
\item A leaf $e$ in $G$ is called a \emph{good} leaf if the set $\{e \cap h \mid h \in E\}$ is totally ordered with respect to inclusion.
\item $G$ is called a \emph{simplicial forest} (or simply, a \emph{hyperforest}) if every subhypergraph of $G$ contains a leaf. A \emph{simplicial tree} (or simply, a \emph{hypertree}) is a connected hyperforest.
\end{enumerate}
\end{definition}

It follows from \cite[Corollary 3.4]{HHTZ} that every hyperforest contains good leaves. It is also immediate that every graph that is a forest is also a hyperforest.

\begin{example} For the hypergraphs depicted below, the first one is not a hypertree while the second one is, see also \cite[Examples~1.4, 3.6]{Faridi}.

\setlength{\unitlength}{0.7cm}
\begin{tikzpicture}
    \tikzstyle{point}=[circle,thick,draw=black,fill=black,inner sep=0pt,minimum width=4pt,minimum height=4pt]

 \node (a)[point,label={[xshift=-0.3cm, yshift=0 cm]: $\bf{a}$}] at (0,0) {};

    \node (b)[point,label={[label distance=0cm]0: $\bf{b}$}] at (3.5,0) {};

\node (c)[point,label={[label distance=0cm]0: $\bf{c}$}] at (1.5,3) {};

    \node (d)[point,label={[label distance=0cm]0:$\bf{d}$}] at (1.5,1) {};

 \draw[pattern=north east lines] (a.center) -- (b.center) -- (d.center) -- cycle;
    \draw[pattern=north west lines] (a.center) -- (b.center) -- (c.center) -- cycle;
    \draw[pattern=vertical lines]   (c.center) -- (b.center) -- (d.center) -- cycle;

  \node (x)[point,label={[xshift=-0.7cm, yshift=0 cm]0:$\bf{x}$}] at (5,0) {};

    \node (y)[point,label={[xshift=-0.7cm, yshift=0 cm]0:$\bf{y}$}] at (5,2) {};

\node (z)[point,label={[label distance=0cm]0:$\bf{z}$}] at (7,0) {};

    \node (u)[point,label={[label distance=0cm]0:$\bf{u}$}] at (7,2) {};

       \node (w)[point,label={[label distance=0cm]0:$\bf{v}$}] at (9,0) {};

 \draw[pattern=north east lines] (x.center) -- (y.center) -- (z.center) -- cycle;
    \draw[pattern=north west lines] (y.center) -- (z.center) -- (u.center) -- cycle;
    \draw   (u.center) -- (w.center);

\end{tikzpicture}
\end{example}

In this paper, we will focus on two invariants that are known to bound the depth of $R/I$ when $I$ is the edge ideal of an arbitrary hypergraph. When $G$ is a simplicial forest, we will provide a linearly decreasing lower bound for the depths of the powers of $I$ using each of these invariants. The first of these bounds for the depth function of a squarefree monomial ideal is the edgewise domination number introduced in \cite{DS2}. Recall that for a hypergraph $G = (V,E)$, a subset $F \subseteq E$ is called \emph{edgewise dominant} if for every vertex $v \in V$ either $\{v\} \in E$ or $v$ is adjacent to a vertex contained in an edge of $F$. 

\begin{definition}[\protect{\cite{DS2}}]
The \emph{edgewise domination number} of $G$ is defined to be
$$\epsilon(G) = \min \{|F| \mid F \subseteq E \text{ is edgewise dominant}\}.$$
\end{definition}

The second invariant used in this paper will be a variation on the depth bound for monomial ideals introduced in \cite{FHM}. For an arbitrary vertex $b_0$ in a hypergraph $G$, define a {\em star} on $b_0$ to be a linear sum $b_0+b_1+\cdots + b_t$ such that for each edge $E_i$ of $G$, if $b_0 \in E_i$, then there exists a $j > 0$ such that $b_j \in E_i$. It was shown in \cite[Theorem 3.11]{FHM} that a set of vertex-disjoint stars that can be embedded in a hypergraph $G$ forms an initially regular sequence and, thus, gives a lower bound for the depth of $R/I(G)$. While much of \cite{FHM} focuses on strengthening this bound by weakening the disjoint requirement and allowing for additional types of linear sums, in this article we will apply the bound to graphs, where the situation is more restricted. Notice that for a graph $G$, a star on $b_0$ is the sum of all vertices in the closed neighborhood of $b_0$, while for a hypergraph, a subset of the closed neighborhood can suffice. A {\em star packing} is a collection $S$ of vertex-disjoint stars in $G$ such that if $x \in V_G$ then $N_G[x] \cap {\mbox{\rm Supp}}(S) \not= \emptyset$. In other words, $S$ is maximal in the sense that no additional disjoint stars exist. This leads to the following definition, whose notation reflects its relationship to a $2$-packing of closed neighborhoods in graph theory. 
\begin{definition}\label{star_packing}
The {\em star packing number} $\alpha_2$ of a hypergraph $G$ is given by
$$\alpha_2(G) = \max \{ |S| \mid S \, {\mbox{\rm is a star packing of}}\, G\}.$$
\end{definition}

\begin{remark}\label{unused.vertices}
If $x_1, \ldots, x_k \in R$ are variables in $R$ that do not appear in any edge of $G$, then $x_1 , \ldots , x_k$ is a regular sequence on $R/I(G)$ and $\depth R/I(G) = k + \depth R/(x_1, \ldots , x_k, I(G))$. 
\end{remark}

Note that if $S$ is any set of disjoint stars in a hypergraph $G$, then $\alpha_2(G) \geq |S|$ since $S$ can be extended to a full star packing. Note also that for the special case when $G$ is a graph, a star packing is equivalent to a closed neighborhood packing and, by focusing on the centers of the stars, to a maximal set of vertices such that the distance between any two is at least $3$.

\section{Depth of powers of squarefree monomial ideals}\label{epsilon}

In this section, we use a technique introduced in \cite{BHT} to give a general lower bound for the depth function of a squarefree monomial ideal when the underlying hypergraph is a hyperforest (also known as a simplicial forest). 
In the case of a forest, we extend the result to show that an additional, often stronger, bound holds. For simplicity of notation, we write $V_G$ and $E_G$ to denote the vertex and edge sets of a hypergraph $G$.

\begin{theorem} \label{thm.powerforest}
Let $G$ be a hyperforest with at least one edge of cardinality at least $2$, and let $I = I(G)$. Then for all $s \geq 1$,
$$\depth R/I^s \geq \max\{\epsilon(G)-s+1, 1\}.$$
\end{theorem}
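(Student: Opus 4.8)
The plan is to induct on the power $s$, using the Dao--Schweig bound $\depth R/I(H) \ge \epsilon(H)$ for hyperforests $H$ \cite{DS2} as the base case $s=1$; since $G$ has an edge of cardinality at least $2$, one checks that $\epsilon(G) \ge 1$, so $\max\{\epsilon(G),1\} = \epsilon(G)$ and the case $s=1$ is immediate. For $s \ge 2$ I would run a second, nested induction on the number of vertices of $G$, so that the asserted bound may be invoked for all powers of edge ideals of strictly smaller hyperforests.

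To produce the descent, I would adapt the short exact sequence technique of \cite{BHT}. By \cite[Corollary 3.4]{HHTZ} the hyperforest $G$ has a good leaf $e$, and any leaf possesses a free vertex $v$ (a vertex lying in no other edge). Writing $x_e = v\cdot m$ for the corresponding generator of $I$, I would study
$$0 \to R/(I^s : v) \xrightarrow{\ \cdot v\ } R/I^s \to R/(I^s, v) \to 0$$
and apply the Depth Lemma, which gives $\depth R/I^s \ge \min\{\depth R/(I^s:v),\ \depth R/(I^s,v)\}$. The point of choosing $v$ free is that both outer terms are again governed by hyperforests: reducing modulo $v$ kills exactly the generator $x_e$, so $R/(I^s,v) \cong R'/I(G-v)^s$ over $R'=R/(v)$, where $G-v$ is the smaller hyperforest obtained by deleting $v$ and its unique edge; and a direct monomial computation identifies the colon as $I^s:v = I^{s-1}J$, where $J = I(G')$ is the edge ideal of the hyperforest $G'$ obtained by shrinking the leaf $e$ to $e\setminus\{v\}$. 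The nested induction on vertices bounds the cokernel term, while the appearance of the lower power $s-1$ in the colon term is what feeds the induction on $s$.

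The two combinatorial facts I must establish are inequalities for $\epsilon$ under these operations: that deleting a free vertex of a good leaf changes $\epsilon$ in a controlled way (one direction, $\epsilon(G-v) \ge \epsilon(G)-1$, follows by adjoining $e$ to an edgewise dominant set of $G-v$), together with a matching estimate for the shrunk hyperforest $G'$. I would also dispose of the floor at $1$ separately: the hypothesis that some edge has cardinality $\ge 2$ forces $\dim R/I^s = \dim R/I \ge 1$, and one shows that the recursion above never produces a term of depth $0$, so that $\depth R/I^s \ge 1$ for every $s$; this is what lets the bound bottom out at $1$ rather than become vacuous.

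The main obstacle, and the heart of the argument, is to arrange that the Depth Lemma returns exactly $\epsilon(G)-s+1$ rather than one less. The naive estimate degrades in the cokernel term, since deleting a free vertex can drop $\epsilon$ by one, and the good leaf structure must be used precisely here: using the joint $g$ of the good leaf $e$ (the maximal element of the totally ordered family $\{e\cap h \mid h\in E\}$), one can substitute $g$ for $e$ in a minimal edgewise dominant set, which is what controls $\epsilon(G-v)$ relative to $\epsilon(G)$ and prevents the loss; simultaneously the extra factor of $I$ (versus $J$) in the colon term must be exploited so that its depth bound carries the requisite slack. Making these two estimates fit together---the combinatorial behaviour of $\epsilon$ under removal of a good leaf on one side, and the homological comparison of $\depth R/(I^{s-1}J)$ with the bound for $R/I^{s-1}$ on the other---is where the real work lies, and is exactly the ``certain inequality when restricted to subhypergraphs'' alluded to in the introduction.
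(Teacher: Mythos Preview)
Your short exact sequence is set up with the wrong element: you colon by a single free vertex $v$, whereas the paper colons by the full good-leaf monomial $x_e$. This difference is not cosmetic; it is what breaks your induction on both sides.

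On the cokernel side, $R/(I^s,v)\cong R'/I(G\!-\!v)^s$ and you need $\depth \ge \epsilon(G)-s+1$, hence $\epsilon(G\!-\!v)\ge \epsilon(G)$. But deleting a free vertex of a leaf can genuinely drop $\epsilon$ by one: for the path $1\text{--}2\text{--}3\text{--}4\text{--}5$ one has $\epsilon(G)=2$ while $\epsilon(G\!-\!1)=1$. Your ``substitute the joint $g$ for $e$'' manoeuvre only shows that a minimum edgewise dominant set of $G$ can be chosen to avoid $e$, i.e.\ it gives $\epsilon(G\!-\!v)\le \epsilon(G)$, which is the wrong inequality. There is no combinatorial trick here that restores the lost unit.

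On the kernel side you correctly compute $I^s:v = I^{s-1}J$ with $J=I(G')$, but this is a \emph{product} of two different edge ideals, not a power of a single one, so neither your induction on $s$ nor the one on $|V_G|$ applies to it. You flag this as ``where the real work lies,'' but no mechanism is offered, and in fact none is available in this formulation.

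The paper sidesteps both problems by strengthening the inductive statement: it proves that for any partition $E_G=E_H\sqcup E_T$ one has $\depth R/[I(H)+I(T)^s]\ge \epsilon(G)-s+1$, and colons by the \emph{edge} $x_e$ for a good leaf $e$ of $T$. Then the cokernel is $R/[I(H+e)+I(T\setminus e)^s]$, which is the \emph{same} $G$ with a smaller $T$, so $\epsilon(G)$ is untouched and induction on $|E_T|$ applies with no loss. The kernel uses $I(T)^s:x_e=I(T)^{s-1}$ (this is where the good-leaf hypothesis enters, via \cite{CHHKTT}), giving $(I(H):x_e)+I(T)^{s-1}$, which \emph{is} again of the required shape; the combinatorial inequality one must then prove is $\epsilon(G')+1+|W|\ge \epsilon(G)$ for a concretely described contraction $G'$, and this is established directly.

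Finally, your floor at $1$ is not justified: $\dim R/I\ge 1$ does not by itself give $\depth R/I^s\ge 1$, and ``the recursion never produces depth $0$'' is exactly what is at issue. The paper obtains $\depth R/I^s\ge 1$ from the equality $I^{(s)}=I^s$ for hyperforests \cite{HHTZ}, which forces $I^s$ to have no embedded primes.
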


\begin{proof} It follows from \cite[Corollary 3.3]{HHTZ} (see also \cite{GRV}) that the symbolic Rees algebra of $I$ is standard graded. That is, $I^{(s)} = I^s$ for all $s \ge 1$. In particular, this implies that $I^s$ has no embedded primes for all $s \ge 1$. Thus, $\depth R/I^s \ge 1$ for all $s \ge 1$.

It remains to show that $\depth R/I^s \ge \epsilon(G) - s + 1$. Indeed, this statement and, hence, Theorem~\ref{thm.powerforest} follows from the following slightly more general result.
\end{proof}

\begin{proposition} \label{prop.powerforest}
Let $G$ be a hyperforest. Let $H$ and $T$ be subhypergraphs of $G$ such that
$$E_H \cup E_T = E_G \text{ and } E_H \cap E_T = \emptyset.$$
Then we have
$$\depth R/[I(H)+I(T)^s] \geq \max\{\epsilon(G)-s+1, 0\}.$$
\end{proposition}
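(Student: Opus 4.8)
The plan is to prove the statement by a double induction: the outer induction is on the exponent $s$, and the inner induction is on the number of edges $|E_T|$ of $T$. If $\epsilon(G) - s + 1 \le 0$ there is nothing to prove, so I may assume $\epsilon(G) \ge s$. When $s = 1$ we have $I(H) + I(T) = I(G)$, and the desired inequality $\depth R/I(G) \ge \epsilon(G)$ is exactly the Dao--Schweig bound from \cite{DS2}; when $E_T = \emptyset$ we again have $I(H) = I(G)$ and the same bound applies. These serve as the base cases.

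For the inductive step, with $s \ge 2$ and $E_T \ne \emptyset$, I would select a good leaf $e$ of $G$ lying in $T$ (such leaves exist by \cite[Corollary 3.4]{HHTZ}; the case where the available good leaves all lie in $H$ is discussed below), let $m_e = \prod_{x \in e} x$ be the corresponding generator, and split along the short exact sequence
$$0 \longrightarrow R/(J : m_e) \longrightarrow R/J \longrightarrow R/(J, m_e) \longrightarrow 0, \qquad J = I(H) + I(T)^s.$$
By the Depth Lemma it then suffices to bound both outer terms below by $\epsilon(G) - s + 1$. The quotient term is clean: since $m_e \in I(T)$, one finds $(J, m_e) = I(H + e) + I(T - e)^s$, which is again a mixed ideal for the \emph{same} hyperforest $G$ but with one fewer edge in $T$, so the inner induction applies and yields $\depth R/(J, m_e) \ge \epsilon(G) - s + 1$.

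The colon term is where the forest hypothesis does the real work. Using that colons of monomial ideals distribute over sums, $(J : m_e) = (I(H) : m_e) + (I(T)^s : m_e)$. The first summand is the squarefree ideal $I(H')$, where $H'$ is the hypergraph with edges $\{h \setminus e \mid h \in E_H\}$. For the second, I would invoke that $T$ is itself a hyperforest, so $I(T)^s = I(T)^{(s)} = \bigcap_C P_C^s$ over the minimal vertex covers $C$ of $T$ (with $P_C$ the monomial prime on $C$), whence $(I(T)^s : m_e) = \bigcap_C P_C^{\,s - |e \cap C|}$. The key combinatorial input is that a good leaf meets every minimal vertex cover in exactly one vertex: if $p, q \in e \cap C$ with $p \ne q$, minimality forces edges $h_p, h_q$ with $h_p \cap C = \{p\}$ and $h_q \cap C = \{q\}$, and the total ordering of $\{e \cap h\}$ puts, say, $p \in e \cap h_p \subseteq e \cap h_q \subseteq h_q$, contradicting $h_q \cap C = \{q\}$. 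Hence $|e \cap C| = 1$ for all $C$ and $(I(T)^s : m_e) = \bigcap_C P_C^{s-1} = I(T)^{(s-1)} = I(T)^{s-1}$. Therefore $(J : m_e) = I(H') + I(T)^{s-1}$, a mixed ideal at the lower power $s - 1$.

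To close the colon branch I would apply the outer induction (for $s - 1$) to the pair $(H', T)$. This requires the structural fact that $\tilde G$, the hypergraph with edge set $E_{H'} \cup E_T$, is again a hyperforest in which $H'$ and $T$ are edge-disjoint, together with the inequality $\epsilon(\tilde G) \ge \epsilon(G) - 1$; granting these, the outer induction gives $\depth R/(J : m_e) \ge \epsilon(\tilde G) - (s-1) + 1 \ge \epsilon(G) - s + 1$, and the Depth Lemma finishes the step. I expect this structural lemma to be the main obstacle: one must show that deleting the vertices of a good leaf from the remaining edges preserves the simplicial forest property (via Faridi's theory) and that the edgewise domination number drops by at most one, which is precisely the ``inequality under passing to subhypergraphs'' advertised in the introduction. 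The remaining loose end is the selection of the good leaf: when all good leaves of $G$ happen to lie in $H$, I would instead delete such a leaf $e$ from $G$, using its free vertex together with Remark \ref{unused.vertices} to strip the resulting unused variable and reduce to the smaller hyperforest $G - e$; keeping the $\epsilon$-comparison consistent through this deletion is the secondary technical point to nail down.
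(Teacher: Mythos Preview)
Your overall strategy---double induction on $s$ and $|E_T|$, the short exact sequence for multiplication by a good-leaf monomial, and reducing the colon branch to power $s-1$---matches the paper's proof. Two corrections are needed.

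First, take $e$ to be a good leaf of $T$, not of $G$. You already observe that $T$ is itself a hyperforest (being a subhypergraph of one); hence $T$ has a good leaf by \cite[Corollary~3.4]{HHTZ}, and such an $e$ always exists. This removes your ``loose end'' case and the secondary deletion argument entirely. The identity $I(T)^s : m_e = I(T)^{s-1}$ for a good leaf $e$ of $T$ is what the paper invokes from \cite[Theorem~5.1]{CHHKTT}; your symbolic-power argument via $|e\cap C|=1$ is a correct alternative derivation of the same fact.

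Second, your treatment of the colon branch is too coarse. The raw set $\{h\setminus e : h\in E_H\}$ mixes genuine edges with singletons, and your $\tilde G$ need not be simple or have $E_{H'}\cap E_T=\emptyset$. The paper instead separates the singleton vertices into a set $Z$, writes $I(H):m_e = I(H') + (z : z\in Z)$ with $H'$ carrying only edges of size $\ge 2$, deletes from $T$ the edges meeting $Z$ to form $T'$, passes to the smaller ring $R' = k[V_{H'}\cup V_{T'}]$, and records the set $W$ of variables of $R$ left over. Then
\[
\depth R/[(I(H):m_e)+I(T)^{s-1}] \;=\; \depth R'/[I(H')+I(T')^{s-1}] + |W|,
\]
and induction on $s$ (applied to $G' = H'+T'$) gives the lower bound $\epsilon(G')-(s-1)+1+|W|$. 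The needed comparison is therefore $\epsilon(G')+1+|W|\ge\epsilon(G)$, not merely $\epsilon(\tilde G)\ge\epsilon(G)-1$, and the paper proves it by an explicit construction: given an edgewise dominant set $F'$ in $G'$, lift each $f'\in F'\cap E_{H'}$ to an edge $f\in E_H$ with $f'=f\setminus e$, and check that the lifted set together with $e$ and one edge through each vertex of $W$ is edgewise dominant in $G$. This concrete argument is what replaces your unproven ``structural lemma'' on the $\epsilon$ side; the question of whether $G'$ is again a hyperforest, which you flag as the main obstacle, is not verified explicitly in the paper's proof either.
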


\begin{proof}\phantom{\qedhere}
It suffices to show that $\depth R/[I(H)+I(T)^s] \geq \epsilon(G)-s+1.$ We shall use induction on $|E_T|$ and $s$. If $|E_T| = 0$ then the statement follows from \cite[Theorem 3.2]{DS}. If $s = 1$ then the statement also follows from \cite[Theorem 3.2]{DS}. Suppose that $|E_T| \geq 1$ and $s \geq 2$.

Let $e$ be a good leaf of $T$. By the proof of \cite[Theorem 5.1]{CHHKTT}, we have
$I(T)^s : e = I(T)^{s-1}.$
This implies that
$$(I(H)+I(T)^s):e = (I(H):e) + I(T)^{s-1}.$$
Moreover,
$$I(H)+I(T)^s+(e) = I(H+e) + I(T\setminus e)^s.$$
Thus, we have the exact sequence
$$0 \rightarrow R/[(I(H):e)+I(T)^{s-1}] \rightarrow R/[I(H)+I(T)^s] \rightarrow R/[I(H+e)+I(T\setminus e)^s] \rightarrow 0$$
which, in turns, gives
\begin{equation}
\depth R/[I(H)+I(T)^s] \geq
 \min\{\depth R/[(I(H):e)+I(T)^{s-1}], \depth R/[I(H+e)+I(T \setminus e)^s]\}.\label{eq.111}
\end{equation}

Observe that $G = (H+e) + (T \setminus e)$ and $E_{H+e} \cap E_{T \setminus e} = \emptyset$. Thus, by induction on $|E_T|$, we have
$$\depth R/[I(H+e) + I(T \setminus e)^s] \geq \epsilon(G)-s+1.$$
On the other hand, let $Z = \{ z \in V_H \mid \exists h \in E_H \text{ such that } \{z\} = h \setminus e\}$. Let $H'$ be the hypergraph obtained from $I(H):e$ by deleting the vertices in $Z$ and any vertex in $H$ that does not belong to any edge.  Let $T'$ be the hypergraph whose edges are obtained from edges of $T$ after deleting all those that contain any vertex in $V_T\cap Z$. Then
$$I(H):e = I(H') + (z \mid z \in Z).$$

Let $G' = H' + T'$, let $R' = k[V_{H'} \cup V_{T'}]$, and let $W = V_G \setminus (V_{G'} \cup Z)$. It follows by induction on $s$ that
\begin{align*}
\depth R/[(I(H):e)+I(T)^{s-1}] & = \depth R/[I(H')+I(T')^{s-1}+(z \mid z \in Z)] \\
& = \depth R'/[I(H')+I(T')^{s-1}] + |W| \\
& \geq \epsilon(G')-(s-1)+1 + |W| \\
&= \left(\epsilon(G')+1+|W|\right)-s+1.
\end{align*}

Now, let $F' \subseteq E_{G'}$ be an edgewise dominant set in $G'$. By the construction of $H'$, for each $f' \in F' \cap E_{H'}$, there is an edge $f \in E_H$ such that $f' = f \setminus e$. Let $F$ be the set obtained from $F'$ by replacing each $f' \in F' \cap E_{H'}$ by such $f$. Observe that for any vertex $v \in V_G$, either $v \in W$, or $v \in Z$, or $v \in V_{G'}$. If $v \in Z$ then $v$ is dominated by $e$. If $v \in V_{G'}$ then $v$ is dominated by some edge in $F'$. Thus, $F \cup \{e\}$ together with one edge for each vertex in $W$ will form an edgewise dominant set in $G$. This implies that
$$\epsilon(G')+1+|W| \geq \epsilon(G).$$
Therefore,
$$\depth R/[(I(H):e) + I(T)^{s-1}] \geq \epsilon(G)-s+1.$$
Hence, by (\ref{eq.111}), we have
\[
\pushQED{\qed}
\depth R/[I(H)+I(T)^s] \geq \epsilon(G)-s+1. \qedhere
\popQED
\]
\end{proof}

A close examination of the proof of Proposition~\ref{prop.powerforest} shows that we can replace $\epsilon(G)$ by any invariant $\alpha(G)$, for which $\depth R/I(G) \ge \alpha(G)$ and
$\alpha(G') + 1 + |W| \geq \alpha(G),$
where $G'$ and $W$ are defined as in the proof of Proposition~\ref{prop.powerforest}.

\begin{corollary}\label{key.point} 
If $\alpha(G)$ is any invariant of a hyperforest $G$ for which $\depth R/I(G) \ge \alpha(G)$ and
$\alpha(G') + 1 + |W| \geq \alpha(G),$
then
$$\depth R/I^s \geq \max\{\alpha(G)-s+1, 0\}.$$
\end{corollary}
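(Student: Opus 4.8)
The plan is to re-run the proof of Proposition~\ref{prop.powerforest} essentially word for word, with $\alpha$ in place of $\epsilon$, after first pinning down the only two places where the specific invariant $\epsilon$ was actually used. Concretely, I would prove the sharper statement
$$\depth R/[I(H)+I(T)^s] \geq \alpha(G)-s+1$$
for every edge-disjoint decomposition $E_H \cup E_T = E_G$, $E_H \cap E_T = \emptyset$, and then take $E_H = \emptyset$ and $T = G$ to obtain $\depth R/I^s \geq \alpha(G)-s+1$ for $I = I(G)$; the $\max\{\,\cdot\,,0\}$ in the conclusion is then automatic since depth is nonnegative. As in the proposition, the argument is a double induction on $|E_T|$ and on $s$.

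For the base cases I would note that when $|E_T| = 0$ or $s = 1$ one has $I(H)+I(T)^s = I(G)$, so the claim reduces to $\depth R/I(G) \geq \alpha(G)-s+1$; this follows at once from the first hypothesis $\depth R/I(G) \geq \alpha(G)$ together with $s \geq 1$. This is precisely where $\epsilon$ formerly entered through \cite[Theorem~3.2]{DS}, and it is now supplied directly by hypothesis.

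For the inductive step I would choose a good leaf $e$ of $T$ and import, unchanged, the entire combinatorial apparatus of the proposition: the colon identity $I(T)^s:e = I(T)^{s-1}$ of \cite[Theorem~5.1]{CHHKTT}, the short exact sequence, and the constructions of $Z$, $H'$, $T'$, $G' = H'+T'$, and $W$ (including the facts that $G = (H+e)+(T\setminus e)$ and that $G'$ is again a hyperforest, so that the induction hypothesis applies to it). Applying induction to the two outer modules of \eqref{eq.111} gives $\depth R/[I(H+e)+I(T\setminus e)^s] \geq \alpha(G)-s+1$ from the decomposition of $G$, and
$$\depth R/[(I(H):e)+I(T)^{s-1}] \geq \big(\alpha(G')+1+|W|\big)-s+1.$$
The second hypothesis $\alpha(G')+1+|W| \geq \alpha(G)$ then bounds the latter below by $\alpha(G)-s+1$, and \eqref{eq.111} closes the step.

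The main point to get right — and really the only obstacle — is bookkeeping: confirming that $\epsilon$ entered the proposition's proof in exactly these two spots, so that substituting $\alpha$ changes nothing else. I would therefore verify that the colon computation, the exactness of the sequence, and the passage to $H'$, $T'$, $G'$, $W$ are purely combinatorial, depending only on the hyperforest structure of $G$ and the chosen good leaf. A related subtlety is that both hypotheses on $\alpha$ must be understood to hold not merely for $G$ but for every hyperforest (with its associated $G'$ and $W$) encountered in the recursion, since the colon term is analyzed through the smaller hyperforest $G'$ at exponent $s-1$; I would make this universality explicit, so that each use of the inequality $\alpha(G')+1+|W| \geq \alpha(G)$ is licensed at every level of the induction.
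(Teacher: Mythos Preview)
Your proposal is correct and matches the paper's own justification essentially verbatim: the paper derives the corollary simply by observing that in the proof of Proposition~\ref{prop.powerforest} the invariant $\epsilon$ enters only through the base-case bound $\depth R/I(G)\ge\epsilon(G)$ and the inequality $\epsilon(G')+1+|W|\ge\epsilon(G)$, so any $\alpha$ satisfying the two stated hypotheses can be substituted throughout. Your added remark that both hypotheses must hold uniformly for every hyperforest arising in the recursion (not just the initial $G$) is a genuine and welcome clarification of what the corollary is implicitly assuming.
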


For a random hypertree $G$, computations indicate that the depth function $\depth R/I(G)^s$ decreases incrementally as $s$ increases as  predicted by Theorem~\ref{thm.powerforest}. However, for low powers of $I$, the $\epsilon$-bound is often less than optimal, as can be seen by comparing the results to the bounds on $\depth R/I(G)$ obtained from \cite{FHM}. For hypertrees $G$ for which $\depth R/I(G) = \epsilon$, the depth function $\depth R/I(G)^s$ usually does not initially decrease incrementally as $s$ increases. These statements are illustrated by the following pair of examples.

\begin{example} \label{ex.111}
Let $I=(x_1x_2, x_2x_3, x_3x_4, x_3x_5, x_3x_6, x_6x_7,x_6x_8, x_8x_9, x_8x_{10}, x_8x_{11}, x_8x_{12})\subseteq R=\mathbb{Q}[x_1, \ldots, x_{12}]$ be the edge ideal of the graph $G$ depicted below.

\begin{tikzpicture}

 \tikzstyle{point}=[circle,draw=black,fill=black,inner sep=0pt,minimum width=4pt,minimum height=4pt]

 \node (a)[point,label={[xshift=-0.3cm, yshift=0 cm]: ${x_1}$}] at (0,0) {};

    \node (b)[point,label={[xshift=-0.3cm, yshift=0 cm]: ${x_2}$}] at (1,0) {};

\node (c)[point,label={[xshift=-0.3cm, yshift=0 cm]: ${x_3}$}] at (2,0) {};

\node (d)[point,label={[xshift=-0.3cm, yshift=0 cm]: ${x_4}$}] at (2,-1) {};

\node (e)[point,label={[xshift=-0.3cm, yshift=0 cm]:${x_5}$}] at (2,1) {};

\node (f)[point,label={[xshift=-0.3cm, yshift=0 cm]: ${x_6}$}] at (3,0) {};

\node (g)[point,label={[xshift=-0.3cm, yshift=0 cm]: ${x_7}$}] at (3,1) {};

\node (h)[point,label={[xshift=-0.3cm, yshift=0 cm]: ${x_8}$}] at (4,0) {};

 \node (i)[point,label={[xshift=-0.3cm, yshift=0 cm]: ${x_9}$}] at (4,1) {};

\node (j)[point,label={[xshift=-0.3cm, yshift=0 cm]: ${x_{10}}$}] at (4,-1) {};

 \node (k)[point,label={[xshift=0.3cm, yshift=0 cm]: ${x_{11}}$}] at (5,-1) {};

\node (n)[point,label={[xshift=-0.3cm, yshift=0 cm]: ${x_{12}}$}] at (5,0) {};

\draw (a.center) -- (b.center) -- (c.center) -- (f.center) -- (h.center) -- (n.center);

 \draw (c.center) -- (e.center);

 \draw (c.center) -- (d.center);
 \draw (f.center) -- (g.center);
 \draw (h.center) -- (i.center);
  \draw (h.center) -- (j.center);
   \draw (h.center) -- (k.center);
\end{tikzpicture}

\noindent Computation in Macaulay~2 \cite{M2} shows that the depth function of $I$ is $4,3,2,1,1, \ldots$. Thus, Theorem~\ref{thm.powerforest} predicts correctly how the depth function behaves. However, in this example, $\epsilon(G) = 2$ does not give the right value for $\depth R/I$. 
\end{example}

\begin{example} \label{ex.222}
Let $I=(x_1x_2, x_1x_3, x_1x_4, x_4x_5, x_5x_6, x_5x_7,x_4x_8, x_8x_9, x_8x_{10}, x_8x_{11}, x_8x_{12})\subseteq R=\mathbb{Q}[x_1, \ldots, x_{12}]$ be the edge ideal of the graph $G$ depicted below.

\begin{tikzpicture}

 \tikzstyle{point}=[circle,draw=black,fill=black,inner sep=0pt,minimum width=4pt,minimum height=4pt]

 \node (a)[point,label={[xshift=-0.3cm, yshift=0 cm]: ${x_1}$}] at (1,0) {};

    \node (b)[point,label={[xshift=-0.3cm, yshift=0 cm]: ${x_2}$}] at (1,1) {};

\node (c)[point,label={[xshift=-0.3cm, yshift=0 cm]: ${x_3}$}] at (1,-1) {};

\node (d)[point,label={[xshift=-0.3cm, yshift=0 cm]: ${x_4}$}] at (2,0) {};

\node (e)[point,label={[xshift=-0.3cm, yshift=0 cm]:${x_5}$}] at (2,1) {};

\node (f)[point,label={[xshift=-0.3cm, yshift=0 cm]: ${x_6}$}] at (1.5,2) {};

\node (g)[point,label={[xshift=-0.3cm, yshift=0 cm]: ${x_7}$}] at (2.5,2) {};

\node (h)[point,label={[xshift=-0.3cm, yshift=0 cm]: ${x_8}$}] at (3,0) {};

 \node (i)[point,label={[xshift=-0.3cm, yshift=0 cm]: ${x_9}$}] at (3,1) {};

\node (j)[point,label={[xshift=-0.3cm, yshift=0 cm]: ${x_{10}}$}] at (3,-1) {};

 \node (k)[point,label={[xshift=0.3cm, yshift=0 cm]: ${x_{11}}$}] at (4,0) {};

\node (n)[point,label={[xshift=0.3cm, yshift=0 cm]: ${x_{12}}$}] at (4,-1) {};

\draw (a.center) -- (b.center) -- (c.center) ;
\draw (a.center) -- (d.center) -- (e.center) -- (f.center);

 \draw (e.center) -- (g.center);

 \draw (d.center) -- (h.center);
 \draw (h.center) -- (i.center);
  \draw (h.center) -- (j.center);
   \draw (h.center) -- (k.center);
   \draw (h.center) -- (n.center);
\end{tikzpicture}

Then $\epsilon(G) = 3$. Computation in Macaulay~2 \cite{M2} shows that the depth function of $I$ is $3,3,3,1,1,\ldots$. The bound in Theorem~\ref{thm.powerforest} gives the depth function of $I$ to be at least $3, 2, 1, 1, 1, \ldots$. In this example, while $\epsilon(G)$ gives the right value for $\depth R/I$, Theorem~\ref{thm.powerforest} does not predict correctly how the depth function of $I$ behaves.
\end{example}

Examples~\ref{ex.111} and \ref{ex.222} show that to get a sharp bound for the depth function of random hypertrees, we may want to start with invariants other than $\epsilon(G)$ which give stronger bounds for $\depth R/I(G)$. In order to do so, one often needs to assume additional structure on $G$. For example, if $G$ is a forest, the invariant from Definition~\ref{star_packing} can be used.

\begin{proposition}\label{prop.graph.powers}
Let $G$ be a forest with connected components $G_1, \ldots , G_t$.  Let $H$ and $T$ be subforests of $G$ such that $E_H \cup E_T = E_G$, $ E_H \cap E_T = \emptyset$, and $T \cap G_i$ is connected for each $i$. Then 
$$\depth R/[I(H)+I(T)^s] \geq \max\{\alpha_2(G)-s+1, 0\}.$$
\end{proposition}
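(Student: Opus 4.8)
The plan is to run the same double induction as in the proof of Proposition~\ref{prop.powerforest}, with $\epsilon(G)$ replaced by $\alpha_2(G)$ and the connectivity hypothesis carried along. Since depth is nonnegative it suffices to prove $\depth R/[I(H)+I(T)^s]\ge\alpha_2(G)-s+1$, and I would induct with outer parameter $s$ and inner parameter $|E_T|$. In the base cases $|E_T|=0$ and $s=1$ one has $I(H)+I(T)^s=I(G)$ (interpreting $I(T)^s=0$ when $T$ is edgeless), so the required estimate follows from the star packing bound $\depth R/I(G)\ge\alpha_2(G)$ of \cite[Theorem~3.11]{FHM}, which holds for any hypergraph.

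For the inductive step ($|E_T|\ge 1$, $s\ge 2$) I would take $e$ to be a good leaf of $T$; in a forest we may choose it to be a pendant edge $e=\{a,b\}$ with, say, $b$ of degree one in the tree $T\cap G_i$ containing $e$. The identities $I(T)^s:e=I(T)^{s-1}$ (from the proof of \cite[Theorem~5.1]{CHHKTT}) and $I(H)+I(T)^s+(e)=I(H+e)+I(T\setminus e)^s$ produce the same short exact sequence as in Proposition~\ref{prop.powerforest}, hence the same lower bound by the minimum of the depths of the two outer modules. The pair $(H+e,T\setminus e)$ is handled by the inner induction on $|E_T|$; deleting the pendant edge $e$ leaves $(T\setminus e)\cap G_j$ connected for every $j$, so the hypotheses persist. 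For the remaining module I would reproduce verbatim the construction of $Z$, $H'$, $T'$, $G'=H'+T'$ and $W$, together with the identity $\depth R/[(I(H):e)+I(T)^{s-1}]=\depth R'/[I(H')+I(T')^{s-1}]+|W|$ coming from splitting off the regular sequences $\{z\mid z\in Z\}$ and $W$ (Remark~\ref{unused.vertices}). Exactly as in Corollary~\ref{key.point}, the step then reduces to two assertions: that $(G',H',T')$ again satisfies the hypotheses, so that the outer induction on $s$ applies to it, and the inequality $\alpha_2(G')+1+|W|\ge\alpha_2(G)$.

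The first assertion is where I would use the forest structure. Since $T\cap G_i$ is a connected subgraph of the tree $G_i$, the unique $G_i$-path between any two of its vertices consists of $T$-edges. Hence, if $u,u'$ lay in two distinct subtrees of $T'$ obtained by deleting $Z$ from $T\cap G_i$, any $G'$-path joining them would, by uniqueness of paths in the tree $G_i$, be that very $T$-path and therefore a path in $T'$, contradicting that $u,u'$ sit in different $T'$-components; thus distinct $T'$-subtrees lie in distinct components of $G'$, and $T'\cap G'_j$ is connected for every $j$. For the inequality, recall that for a forest a star packing is a set of vertices pairwise at distance at least $3$. Fixing a maximum such set $S$ in $G$ and noting that distances in $G'$ dominate those in $G$, the centers of $S$ lying in $V_{G'}=V_G\setminus(Z\cup W)$ still form a $2$-packing of $G'$. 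At most $|W|$ centers lie in $W$; and since every vertex of $Z$ is an $H$-neighbor of $a$ or of $b$ and, in a forest, no vertex is adjacent to both endpoints of $e$, at most one center lies among the $H$-neighbors of $a$ and at most one among those of $b$, so at most two centers lie in $Z$. If two do, say $c_a$ adjacent to $a$ and $c_b$ adjacent to $b$, I would adjoin the leaf $b$: its only $G'$-neighbor is $a$ (all $H$-neighbors of $b$ lie in $Z$, while $e$ survives in $T'$), and $a$ itself is not a center while its only center-neighbor $c_a$ has been deleted, so no surviving center lies within distance $2$ of $b$ and $b$ extends the restricted packing. Either way $\alpha_2(G')\ge\alpha_2(G)-1-|W|$, which is the desired inequality, and combining the two depth estimates closes the induction.

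The main obstacle is precisely this star-packing inequality: a crude restriction of a maximum $2$-packing can shed two centers clustered around the leaf edge $e$, while the recursion only grants a slack of one. The connectivity hypothesis is what rescues both halves of the argument: it forces the recovered leaf $b$ to have a single neighbor in $G'$, and it prevents $H$-edges from reuniting the subtrees of $T'$, so that the hypotheses propagate to $G'$. Verifying these local configurations around $e$ carefully, and checking that the compensation is always available, is the delicate part; the rest is a direct transcription of Proposition~\ref{prop.powerforest}.
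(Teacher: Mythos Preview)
Your proposal is correct and follows essentially the same double induction and short exact sequence as the paper's proof, with the same compensation trick of adjoining the leaf vertex $b$ to the restricted packing when two centers are lost to $Z$. The one difference is that the paper first observes (from connectivity of $T\cap G_i$ and acyclicity) that no $H$-edge has both endpoints in $V_T$, whence $V_T\cap Z=\emptyset$ and $T'=T$; this makes the verification that the inductive hypotheses persist for $G'$ immediate, whereas you instead argue directly that $T'\cap G'_j$ is connected via uniqueness of paths in $G_i$---a valid argument, but one that is in fact vacuous here since $T'=T$.
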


\begin{proof}
The proof follows the outline of that of Proposition~\ref{prop.powerforest} with special care toward the end. If $|E_T| = 0$  or $s=1$, then the statement follows from \cite[Theorem 3.11]{FHM}, so we assume $|E_T| \geq 1$ and $s \geq 2$.

Consider an edge $\{ x, y\}$ of $H$. Then, $\{ x, y \} \in G_i$ for some $i$. Since $T \cap G_i$ is connected, if $x,y \in V_T$, then there is a path in $T$ from $x$ to $y$. This path, together with $\{x,y\}$, forms a cycle in $G$, which is a contradiction. Thus, no edge of $H$ can have both endpoints in $V_T$. 

Let $e$ be a leaf of $T$. Since $T$ is a forest, $e$ is a good leaf of $T$. Thus, as in the proof of Proposition~\ref{prop.powerforest}, we have
\begin{equation}
\depth R/[I(H)+I(T)^s] \geq
 \min\{\depth R/[(I(H):e)+I(T)^{s-1}], \depth R/[I(H+e)+I(T \setminus e)^s]\}.\label{eq.222}
\end{equation}
Observe further that $G = (H+e) + (T \setminus e)$, $E_{H+e} \cap E_{T \setminus e} = \emptyset$, and  $(T\setminus e) \cap G_i$ is connected for each $i$. Thus, by induction on $|E_T|$, we have
$$\depth R/[I(H+e) + I(T \setminus e)^s] \geq \alpha_2(G)-s+1.$$

On the other hand, let $Z = \{ z \in V_H \mid \exists h \in E_H \text{ such that } \{z\} = h \setminus e\}$. Let $H'$ be the graph obtained from $I(H):e$ by deleting the vertices in $Z$ and any vertex of $H$ that does not belong to any edge.  Note that $V_T\cap Z= \emptyset$, since otherwise there would be an edge of $H$ having both endpoints in $V_T$ (one in $Z$ and the other in $e$). Then
$$I(H):e = I(H') + (z \mid z \in Z).$$

 Let $G' = H' + T$, let $R' = k [V_{H'} \cup V_T]$, and let $W = V_G \setminus (V_{G'} \cup Z)$. 
 It follows by induction on $s$ that
\begin{align*}
\depth R/[(I(H):e)+I(T)^{s-1}] & = \depth R/[I(H')+I(T)^{s-1}+(z \mid z \in Z)] \\
& = \depth R'/[I(H')+I(T)^{s-1}] + |W| \\
& \geq \alpha_2(G')-(s-1)+1 + |W| \\
&= \left(\alpha_2(G')+1+|W|\right)-s+1.
\end{align*}

We will show that $\alpha_2(G') + 1 + |W| \geq \alpha_2(G)$. Fix a set of disjoint stars of $G$ of cardinality $\alpha_2(G)$ and let $S=\{x_1, \ldots , x_{\alpha_2(G)}\}$ denote the set of the centers of these stars. 

Let $S'=\{x_i \mid x_i \in R'\}$ and notice that the set of stars in $G'$ centered at $x_i$ for each $x_i \in R'$ is a set of disjoint stars and thus $\alpha_2(G') \geq |S'|$. 
If $x_i \not\in R'$, then $x_i \in Z \cup W$. Since the stars with centers in $S$ are disjoint, there can be at most two elements in $ Z\cap S$.  If $|Z \cap S| \leq 1$, then $|S'|\geq |S|-1-|W|=\alpha_2(G)-1-|W|$, and so $\alpha_2(G') + 1 +|W| \ge \alpha_2(G)$. 

Suppose that $|Z \cap S| = 2$. Write $e=ab$ and notice that if either $a$ or $b$ is in $S$, then $Z\cap S =\emptyset$. Hence, we may assume that $a, b \not\in S$. We will construct a new set of stars in $G'$ of cardinality at least $\alpha_2(G)-1-|W|$ and, thus, also give $\alpha_2(G') +1 + |W| \ge \alpha_2(G)$ in this case. 

Indeed, let  $\{z_1, z_2\} = Z\cap S$. Then, $z_1, z_2 \in N_G(a) \cup N_G(b)$ and, without loss of generality, we may assume that $z_1 \in N_G(a)$ and $z_2 \in N_G(b)$. Since $e$ is a leaf in $T$, we may also assume that $b$ is a leaf vertex in $T$; that is, $N_T(b)=a$. Then, $N_G(b) \setminus \{a\} \subseteq Z$.  Let $\widehat{S'}=S'\cup \{b\}$.  We claim that the stars in $G'$ centered on the elements of $\widehat{S'}$ are disjoint. Any two stars centered at elements of $S'$ are already disjoint. Consider then a star centered at an element $x_i \in S'$ and the star centered at $b$ in $G'$. Since $x_i \not= z_1$, and the stars in $G$ centered at $x_i$ and $z_1$ are disjoint, we have $a \not\in N_{G'}(x_i)$. Thus, $N_{G'}[x_i] \cap N_{G'}[b] = \emptyset$. Clearly, $|\widehat{S'}| \geq |S|-1-|W|=\alpha_2(G)-1-|W|$. 

Now, we have
$$\depth R/[(I(H):e) + I(T)^{s-1}] \geq \alpha_2(G)-s+1,$$
and the assertion now follows from (\ref{eq.222}).
\end{proof}

Using this result, we obtain the following bound which, while generally is stronger than that of Theorem~\ref{thm.powerforest} when applicable, applies only to graphs that are trees or forests. 

\begin{theorem}\label{star.bound.powers}
Let $G$ be a forest with at least one nontrivial edge, and let $I = I(G)$. Then,
$$\depth R/I^s \geq \max\{\alpha_2(G)-s+1, 1\}.$$
\end{theorem}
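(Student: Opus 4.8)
The plan is to mirror the two-part structure of the proof of Theorem~\ref{thm.powerforest}, establishing separately the two lower bounds $\depth R/I^s \ge 1$ and $\depth R/I^s \ge \alpha_2(G)-s+1$, and then taking their maximum.

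For the first bound, I would note that a forest with at least one nontrivial edge is in particular a hyperforest possessing an edge of cardinality at least $2$, so the opening argument in the proof of Theorem~\ref{thm.powerforest} applies without change. Namely, by \cite[Corollary 3.3]{HHTZ} the symbolic Rees algebra of $I$ is standard graded, so $I^{(s)} = I^s$ for all $s \ge 1$; consequently $I^s$ has no embedded primes, and hence $\depth R/I^s \ge 1$.

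For the second bound, I would specialize Proposition~\ref{prop.graph.powers} to the case $H = \emptyset$ and $T = G$, and verify its three hypotheses. The equalities $E_H \cup E_T = E_G$ and $E_H \cap E_T = \emptyset$ hold trivially, since $E_H = \emptyset$ and $E_T = E_G$. The remaining requirement is that $T \cap G_i$ be connected for each connected component $G_i$ of $G$; but with $T = G$ one has $T \cap G_i = G_i$, which is connected by the very definition of a connected component. As $I(H) = 0$ and $I(T) = I$, Proposition~\ref{prop.graph.powers} then gives $\depth R/I^s \ge \max\{\alpha_2(G)-s+1, 0\} \ge \alpha_2(G)-s+1$.

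Combining the two inequalities finishes the argument: if $\alpha_2(G)-s+1 \ge 1$ the second bound already supplies the claimed value, while if $\alpha_2(G)-s+1 < 1$ the first bound gives $\depth R/I^s \ge 1$, so in either case $\depth R/I^s \ge \max\{\alpha_2(G)-s+1, 1\}$. In this reduction there is no serious obstacle left to overcome: all of the genuine difficulty---in particular the delicate star-packing surgery producing the inequality $\alpha_2(G')+1+|W| \ge \alpha_2(G)$ in the case $|Z \cap S| = 2$---has already been dealt with inside Proposition~\ref{prop.graph.powers}. The only point demanding any attention is checking that the choice $T = G$ meets the connectivity hypothesis, and this is immediate.
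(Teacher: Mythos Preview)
Your proposal is correct and matches the paper's proof almost exactly: the paper likewise derives $\depth R/I^s \ge 1$ from $I^{(s)} = I^s$ and then invokes Proposition~\ref{prop.graph.powers} for the bound $\depth R/I^s \ge \alpha_2(G)-s+1$. The only cosmetic difference is that the paper cites \cite[Theorem~5.9]{SVV} (a graph-theoretic result) rather than \cite[Corollary~3.3]{HHTZ} for the equality of symbolic and ordinary powers; either reference is valid here.
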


\begin{proof} It follows from \cite[Theorem 5.9]{SVV} that $I^{(s)} = I^s$ for all $s \ge 1$ and so $\depth R/I^s \ge 1$ for all $s \ge 1$.
By Proposition~\ref{prop.graph.powers}, $\depth R/I^s \ge \alpha_2(G) - s + 1$ and the result follows.
\end{proof}

\begin{example}
	Let $G$ be the graph in Example \ref{ex.111}. Using $x_1, x_5, x_7, x_9$ as centers of stars, we have $\alpha_2(G) = 4$. Thus, Theorem \ref{star.bound.powers} gives the correct depth function $\depth R/I(G)^s$, for all $s \in \NN$, for this graph.
	
	On the other hand, let $G$ be the graph as in Example \ref{ex.222}. Then, $\alpha_2(G) = 3 = \epsilon(G)$, and so Theorem \ref{star.bound.powers} gives the same bound as that of Theorem \ref{thm.powerforest} for this graph.
\end{example}

It would be interesting to know whether the length of a more general initially regular sequence with respect to $I(G)$, or improved bounds for $\depth R/I(G)$ obtained in \cite[Section 4]{FHM}, could be used to get better bounds for the depth function than those given in Theorem~\ref{thm.powerforest} when $G$ is a hyperforest.


\begin{thebibliography}{99}

\bibitem{BHH} S. Bandari, J. Herzog, and T. Hibi, Monomial ideals whose depth function has any given number of strict local maxima. Ark. Mat. 52 (2014), 11--19.

\bibitem{BHT} S. Beyarslan, H.T. H\`a, and T.N. Trung, Regularity of powers of forests and cycles.  Journal of Algebraic Combinatorics, 42 (2015), no. 4, 1077-1095.

\bibitem{Brod} M. Brodmann, The asymptotic nature of the analytic spread. Math. Proc. Cambridge Philos. Soc. 86 (1979), no. 1, 35--39.

\bibitem{BH} W. Bruns and J. Herzog, Cohen-Macaulay rings. Cambridge Studies in Advanced Mathematics, 39. Cambridge University Press, Cambridge, 1993. xii+403 pp.

\bibitem{Bur}  L. Burch, Codimension and Analytic Spread. Proc. Cambridge Philos. Soc. 72 (1972), 369-373.

\bibitem{CHHKTT} G. Caviglia, H.T. H\`a, J. Herzog, M. Kummini, N. Terai, and N.V. Trung, 
Depth and regularity modulo a principal ideal. J. Algebraic Combin. 49 (2019), no. 1, 1--20.

\bibitem{DS} H. Dao and J. Schweig, Projective dimension, graph domination parameters, and independence complex homology. J. Combin. Theory Ser. A 120 (2013), 453--469.

\bibitem{DS2} H. Dao and J. Schweig, Bounding the projective dimension of a squarefree monomial ideal via domination in clutters. Proc. Amer. Math. Soc. 143 (2015), no. 2, 555--565.

\bibitem{EH}  D. Eisenbud and C. Huneke, Cohen-Macaulay Rees Algebras and their Specializations. J. Algebra 81 (1983) 202-224.

\bibitem{Faridi} S. Faridi, Simplicial trees are sequentially Cohen-Macaulay. J. Pure Appl. Algebra 190 (2004), no. 1-3, 121--136.

\bibitem{FM} L. Fouli and S. Morey, A lower bound for depths of powers of edge ideals. J. Algebraic Combin. 42 (2015), no. 3, 829--848.

\bibitem{FHM} L. Fouli, H.T. H\`a, and S. Morey, Initially regular sequences and depth of ideals. Preprint (2019), \href{https://arxiv.org/abs/1810.01512}{\tt arXiv:1810.01512}.

\bibitem{GRV} G. Isidoro, R. Enrique, and R.H. Villarreal, Blowup algebras of square-free monomial ideals and some links to combinatorial optimization problems. Rocky Mountain J. Math. 39 (2009), no. 1, 71--102.

\bibitem{M2}{{D. R. Grayson and M. E. Stillman},{ Macaulay2, a software system for research in algebraic geometry},
         {Available at \url{https://faculty.math.illinois.edu/Macaulay2/}} }

\bibitem{HNTT} H.T. H\`a, H.D. Nguyen, N.V. Trung, and T.N. Trung, Depth functions of powers of homogeneous ideals. Preprint (2019), \href{https://arxiv.org/abs/1904.07587}{\tt arXiv:1904.07587}.

\bibitem{HS} H.T. H\`a and M. Sun, Squarefree monomial ideals that fail the persistence property and non-increasing depth. Acta Math. Vietnam. 40 (2015), no. 1, 125--137.

\bibitem{HangT} N.T. Hang and T.N. Trung, The behavior of depth functions of cover ideals of unimodular hypergraphs. Ark. Mat. 55 (2017), no. 1, 89--104.

\bibitem{HH} J. Herzog and T. Hibi, Monomial Ideals. Graduate Texts in Mathematics 260, Springer, 2011.

\bibitem{HH2} J. Herzog and T. Hibi, The depth of powers of an ideal. J. Algebra 291 (2005), no. 2, 534--550.

\bibitem{HHTZ} J. Herzog, T. Hibi, N.V. Trung, and X. Zheng, Standard graded vertex cover algebras, cycles and leaves. Trans. Amer. Math. Soc. 360 (2008), no. 12, 6231--6249.

\bibitem{HV} J. Herzog and M. Vladoiu, Squarefree monomial ideals with constant depth function. J. Pure Appl. Algebra 217 (2013), no. 9, 1764--1772.

\bibitem{HHKO1} T. Hibi, A. Higashitani, K. Kimura, and A.B. O'Keefe, Depth of initial ideals of normal edge rings. Comm. Algebra 42 (2014), no. 7, 2908--2922.

\bibitem{HKTT} L.T. Hoa, K. Kimura, N. Terai, and T.N. Trung, Stability of depths of symbolic powers of Stanley-Reisner ideals. J. Algebra 473 (2017), 307--323.

\bibitem{HoaT} L.T. Hoa and T.N. Trung, Stability of depth and Cohen-Macaulayness of integral closures of powers of monomial ideals. Acta Math. Vietnam. 43 (2018), no. 1, 67--81.

\bibitem{LM1} K.-N. Lin and P. Mantero, Hypergraphs with high projective dimension and 1-dimensional hypergraphs. Internat. J. Algebra Comput. 27 (2017), no. 6, 591--617.

\bibitem{LM2} K.-N. Lin and P. Mantero, Projective dimension of string and cycle hypergraphs. Comm. Algebra 44 (2016), no. 4, 1671--1694.

\bibitem{MST} K. Matsuda, T. Suzuki, and A. Tsuchiya, Nonincreasing depth function of monomial ideals. Glasg. Math. J. 60 (2018), no. 2, 505--511.

\bibitem{MN} C.B. Miranda-Neto, Analytic spread and non-vanishing of asymptotic depth. Math. Proc. Cambridge Philos. Soc. 163 (2017), no. 2, 289--299.

\bibitem{Morey} S. Morey, Depths of powers of the edge ideal of a tree. Comm. Algebra 38 (2010), no. 11, 4042--4055.

\bibitem{MV} S. Morey and R.H. Villarreal, Edge ideals: Algebraic and combinatorial properties. {\it Progress in Commutative Algebra 1}, 85-126, de Gruyter, Berlin, 2012.

\bibitem{P3} D. Popescu, Graph and depth of a monomial squarefree ideal. Proc. Amer. Math. Soc. 140 (2012), no. 11, 3813--3822.

\bibitem{SVV} A. Simis, W.V. Vasconcelos, and R.H. Villarreal, On the ideal theory of graphs. J. Algebra 167 (1994), 389--416.

\bibitem{TT} N. Terai and N.V. Trung, On the associated primes and the depth of the second power of squarefree monomial ideals. J. Pure Appl. Algebra 218 (2014), no. 6, 1117--1129.

\end{thebibliography}
\end{document}